\newtheorem{theorem}{Theorem}[section]  
\newtheorem{lemma}[theorem]{Lemma}    
\newtheorem{proposition}[theorem]{Proposition} 
\newtheorem{corollary}[theorem]{Corollary}
\theoremstyle{definition}   
\newtheorem{remark}[theorem]{Remark}
\newtheorem{definition}[theorem]{Definition}
\theoremstyle{definition}
\newtheorem*{theorem*}{Theorem}         
\newtheorem*{lemma*}{Lemma}              
\newtheorem*{proposition*}{Proposition}  
\newtheorem*{corollary*}{Corollary}
\newtheorem{question*}{Question}
\theoremstyle{definition}   
\newtheorem*{remark*}{Remark}
\newtheorem*{definition*}{Definition}
\newtheorem*{example*}{Example}
\newtheorem*{observation*}{Observation}
\newtheorem{claim*}{Claim}
\newtheorem*{fact*}{Fact}
\newtheorem*{exercise*}{Exercise}
\newtheorem*{note*}{Note}
\newtheorem*{notation*}{Notation}
\newtheorem*{conjecture*}{Conjecture}
\newcommand{\mb}[1]{\mathbb{#1}}
\newcommand{\mc}[1]{\mathcal{#1}}
\newcommand{\msc}[1]{\mathscr{#1}}
\newcommand{\set}[1]{\{#1\}}
\newcommand{\norm}[1]{\lVert #1 \rVert}
\newcommand{\id}{\mathrm{Id}}
\newcommand{\ddbar}{\partial \bar{\partial}}
\title[Almost holomorphic mappings]{On holomorphic mappings between almost Hermitian manifolds}
\author{Kirollos Masood}
\begin{document}
\maketitle

\begin{abstract}
    Our goal is to combine the techniques of Xiaokui Yang, Valentino Tosatti, and others to establish a Liouville-type result for almost complex manifolds. The transition to the non-integrable setting is delicate, so we will devote a section to discuss the key differences, and another to introduce the tools we will be using. Afterwards, we present a proof of our main theorem.
\end{abstract}

{\let\thefootnote\relax\footnote{\noindent{\emph{Mathematics Subject Classification (2010)}: 32Q60. \\
			\emph{Key words}: Almost complex geometry, almost Hermitian geometry, canonical connection, almost holomorphic map.}}}

\tableofcontents

\newpage

\section{Introduction}

An \emph{almost complex manifold} is a real manifold $M$, together with a bundle morphism $J:TM \to TM$ so that $J^2 = -\id$. An \emph{almost Hermitian manifold} is an almost complex manifold $(M,J)$ together with a Riemannian metric $g$ that is compatible with the almost complex structure: $g(J\cdot,J \cdot)=g$. As the name suggests, an almost complex structure generalizes the notion of a complex manifold. In Hermitian geometry, one typically uses the Chern connection, which coincides with the Levi-Civita only when $M$ is K{\"a}hler. In the almost Hermitian setting, we have the canonical connection, which generalizes the Chern connection, first introduced in \cite{ehresmann1951structures}. In this paper, we will be dealing with arbitrary almost Hermitian manifolds and all geometric quantities will be those corresponding to the canonical connection, unless otherwise specified.

The realm of almost Hermitian geometry is relatively unexplored. As such, one wonders what results in complex geometry can be extended or modified to apply to the almost complex setting. We will focus on proving the following result, an extension of Theorem 1.2 in \cite{2018arXiv181003276Y} to almost Hermitian manifolds.

\begin{theorem}
\label{main}
   Let $f:(M^{2m},J,h) \to (N^{2n},\tilde{J},g)$ be an almost holomorphic map between almost Hermitian manifolds equipped with their respective almost Chern connections, with $M$ compact. Assume that the holomorphic sectional curvatures of $M$ and $N$ are positive and non-positive (or non-negative and negative), respectively. Then $f$ is a constant map.
\end{theorem}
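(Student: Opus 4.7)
The plan is to adapt the Royden--Yau--Yang Schwarz/Bochner argument to the almost Hermitian setting, with the canonical connection taking the place of the Chern connection. Because $f$ is almost holomorphic, its differential is $J$-linear, so $df$ coincides with its $(1,0)$-component $\partial f \in \Hom(T^{1,0}M, f^*T^{1,0}N)$. Let $u := |\partial f|^2_{h,g}$ be the associated energy density, a smooth function on the compact manifold $M$. The goal is to show $u \equiv 0$; combined with almost holomorphicity, this forces $df \equiv 0$ and hence the constancy of $f$. I would establish a Bochner-type inequality of the schematic shape
\[
\Delta_h u \;\geq\; |\nabla \partial f|^2 \;+\; \bigl(\text{hol. sec. curv. of } M\bigr)\cdot P(\partial f) \;-\; \bigl(\text{hol. sec. curv. of } N\bigr)\cdot Q(\partial f) \;+\; \mathcal{T},
\]
where $P, Q$ are non-negative polynomials in the singular values of $\partial f$ and $\mathcal{T}$ collects torsion contributions specific to the non-integrable case. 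The maximum principle on the compact manifold $M$ then delivers the conclusion.

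I would carry out the computation in a unitary frame $\{e_i\}$ for the canonical connection on $(M,J,h)$, chosen at a maximum point $p$ of $u$ to diagonalize $\partial f$, so that $\partial f(e_i) = \lambda_i \epsilon_i$ for a unitary frame $\{\epsilon_\alpha\}$ of $T^{1,0}N$ adapted at $f(p)$. Expanding $\Delta_h u$ and commuting two covariant derivatives produces Chern-type curvature contractions on the source and pulled-back curvature contractions on the target. The crucial algebraic step is Royden's trick, which bounds the bisectional-type sums $\sum_{i,j} R^N_{i\bar j i \bar j}\lambda_i\bar\lambda_j\cdots$ by multiples of the holomorphic sectional curvature evaluated along $\sum_i \lambda_i \epsilon_i$ on the target, and analogously by the holomorphic sectional curvature along $\sum_i \lambda_i e_i$ on the source. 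Under either set of curvature hypotheses, the resulting curvature block has a definite sign.

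The principal obstacle, and the reason this is not a mechanical transcription of Yang's proof, is the control of the torsion term $\mathcal{T}$. The canonical connection on an almost Hermitian manifold is not torsion-free: its torsion is built from the Nijenhuis tensor and the $(0,2)$-part of $d\omega$, both of which vanish identically only in the K{\"a}hler case. When commuting $\nabla_i$ with $\nabla_{\bar j}$, and when differentiating the almost holomorphicity identity $\bar\partial f = 0$ covariantly, torsion terms materialize that have no analogue in the Hermitian setting. In particular, although $\bar\partial f$ vanishes pointwise, its covariant derivatives need not, and this is the delicate point the introduction flags. The hard part of the proof will be to show, following the techniques of Tosatti and collaborators for the canonical connection, that $\mathcal{T}$ is either (i) absorbed by the a priori non-negative term $|\nabla \partial f|^2$ via Cauchy--Schwarz, (ii) cancelled upon symmetrizing in the holomorphic/anti-holomorphic index pair, or (iii) forced to vanish at $p$ by the critical point condition $\nabla u(p) = 0$. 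Once $\mathcal{T}$ is tamed, the corrected Bochner inequality at $p$ reads $0 \geq \Delta_h u(p) \geq c\, u(p)^2$ with $c > 0$ under either hypothesis, so $u(p) = 0$, hence $u \equiv 0$, and $f$ is constant.
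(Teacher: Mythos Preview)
Your proposal has a genuine gap, and it is not where you think it is. The step you flag as delicate---taming the torsion terms $\mathcal{T}$---is real but secondary; the paper handles it cleanly by computing in \emph{normal quasi holomorphic frames} at $p$ and $f(p)$, which force all Christoffel symbols and their relevant first derivatives to vanish at the point (Lemma~\ref{gauge}). The step that actually fails is Royden's trick. That polarization argument requires the K{\"a}hler symmetry $R_{i\bar j k\bar\ell}=R_{k\bar j i\bar\ell}$ of the curvature tensor in order to average the quartic form $R(\xi,\bar\xi,\eta,\bar\eta)$ over phases and rewrite bisectional-type sums in terms of holomorphic sectional curvature. The curvature of the canonical (almost Chern) connection on a general almost Hermitian manifold does \emph{not} enjoy this symmetry---one only has $R_{i\bar j k\bar\ell}=\overline{R_{j\bar i \ell\bar k}}$---so the inequality you need simply is not available. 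Your Bochner computation on the traced energy $u=|\partial f|^2$ will produce source terms of second-Ricci type and target terms of bisectional type, and neither can be controlled by holomorphic sectional curvature alone without that missing symmetry. This is already an obstruction in the integrable non-K{\"a}hler case, before any Nijenhuis contributions enter.

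The paper avoids this problem entirely by following Yang's projective-bundle method: instead of tracing, one studies the \emph{directional} quotient
\[
\msc{Y}(p,[W])=\frac{g(f_*W,f_*W)}{|W|_h^2}
\]
as a function on the compact manifold $\mb{P}(TM)$. At a maximum $(p,[W])$ one evaluates $\partial\bar\partial\msc{Y}$ on the horizontal lift of $W$ itself. The source curvature then appears only as $R_{\alpha\bar\beta\gamma\bar\delta}W^\alpha\bar W^\beta W^\gamma\bar W^\delta$, i.e.\ the genuine holomorphic sectional curvature along $W$, and likewise the target curvature appears only along $f_*W$. No averaging, no extra symmetry of $R$ is needed. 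That is the key idea you are missing; the torsion bookkeeping, while nontrivial, is then a matter of careful frame choice rather than an analytic absorption argument.
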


\section{The Almost Complex Structure}

To make sense of this theorem and its proof we provide, a proper foundation in almost complex geometry is needed. In particular, we should explain what tools we may carry over from the integrable setting, and how we will adapt.

\begin{remark}
\label{del}
    Recall that on a complex manifold, we have a splitting: $d = \partial + \bar{\partial}$, with $\bar{\partial}^2=0$. In the almost complex setting, the same does not hold, though we do have a similar structure.
    \begin{enumerate}
    
        \item
        We still have a decomposition $\mc{A} ^{r}(M)=\bigoplus_{p+q=r} \mc{A} ^{p,q}(M)$ and respective projections. With this in mind, we may still define $\partial:\mc{A}^{p,q}(M) \to \mc{A}^{p+1,q}(M)$ and $\bar{\partial}:\mc{A}^{p,q}(M) \to \mc{A}^{p,q+1}(M)$. More explicitly, simply apply the exterior derivative and project onto the appropriate subspace.
        
        \item
        Although we don't have $d = \partial + \bar{\partial}$ on the entire cohomology ring, this is clearly the case on the level of smooth functions. Moreover, 
        \begin{gather*}
            \partial f = \frac{df-i(df)J}{2}, \quad \mathrm{and} \quad
            \bar{\partial} f = \frac{df+i(df)J}{2}.
        \end{gather*}
    
        \item
        Given a smooth function $f:M \to \mb{R}$, we can still define $\ddbar f \in \mc{A}^{1,1}(M)$ and we have the explicit formula
        \begin{gather*}
            \ddbar f = \frac{d \bar{\partial}f+ d \bar{\partial}f(J \cdot, J \cdot)}{2}.
        \end{gather*}
    \end{enumerate}
\end{remark}

\begin{definition}
    Let $f:(M,J,h) \to (N,\tilde{J}.g)$ be a map between almost Hermitian manifolds. We say that $f$ is almost holomorphic if $(f_*)J=\tilde{J} (f_*)$.
\end{definition}

\noindent
Note then that this means $f$ takes $(1,0)$-vectors to $(1,0)$-vectors and of course the same holds for $(0,1)$-vectors. 

\begin{remark}
\label{hess}
    Given \emph{any} affine connection $\nabla$ on $TM$, one may define the Hessian of a smooth function $\nabla^2 f$ (also denoted by $\nabla df$), which is the second covariant derivative of $f$ with respect to the given connection. This Hessian will still be positive (negative) semi-definite at a local min (max) of $f$. However, the tensor will not be symmetric unless one uses a torsion-free connection.
\end{remark}

\begin{definition}
\label{hol1}
    Let $(M^{2n},J,g)$ be an almost Hermitian manifold.
    \begin{enumerate}
    
        \item 
        We say that a real vector field $W \in \Gamma(TM)$ is \emph{automorphic} if $\pounds_{W}J=0$.
        
        \item
        Suppose $J$ is integrable. A real vector field $W \in \Gamma(TM)$ is \emph{holomorphic} if at any $p \in M$, $W$ is holomorphic as a mapping from $\mb{C}^n \supseteq U  \to \mb{C}^n$ through the use of a holomorphic chart. Note that this definition is independent of the choice of such charts.
        
        \item
        Let $J$ be integrable and suppose $W \in \Gamma(T^{1,0}M)$ (in particular, $W$ is a complex vector field). Then we may locally write $W$ as $W^i \frac{\partial}{\partial z^i}$. We shall call $W$ \emph{complex-holmorphic} if for any $p \in M$ and any holomorphic chart $(U,\phi,\set{z^i})$ about $p$, each $W^i$ is a holomorphic function on $U$.
        
    \end{enumerate}
\end{definition}

\begin{proposition}
\label{holeq}
    Let $M$ be a complex manifold and $W$ a real vector field. Then the following are equivalent:
    \begin{enumerate}
    
        \item 
        $W$ is automorphic.
        
        \item
        $W$ is holomorphic.
        
        \item
        $\frac{W-iJW}{2}$ is complex-holomorphic.
        
    \end{enumerate}
\end{proposition}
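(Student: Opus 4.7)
The plan is to work in a holomorphic chart $(U,\phi,\{z^j\})$ about an arbitrary point, writing $z^j = x^j + iy^j$ and expanding the real field $W$ as $W = \sum_j \bigl(a^j \partial_{x^j} + b^j \partial_{y^j}\bigr)$ with $a^j,b^j$ real-valued. Setting $W^j = a^j + ib^j$ and using $\partial_{z^j} = \tfrac{1}{2}(\partial_{x^j}-i\partial_{y^j})$, a short computation gives
\[
    \tfrac{W - iJW}{2} \;=\; \sum_j W^j \,\partial_{z^j},
\]
so that condition (3) becomes: each complex coefficient $W^j$ is holomorphic on $U$. Under the identification of $W$ with the $\mathbb{C}^n$-valued function $p \mapsto (W^1(p),\dots,W^n(p))$ induced by the chart, condition (2) is by definition the statement that each $W^j$ is holomorphic on $U$. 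Hence (2)$\Leftrightarrow$(3) is essentially tautological, and the only content is to check that this property is invariant under transition between holomorphic charts, which follows from the chain rule since the transition maps are biholomorphic.

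For (1)$\Leftrightarrow$(3) I would take the coordinate-free route via the complexified tangent bundle. Extend $J$ to $TM \otimes \mathbb{C}$ and recall that $\mathcal{L}_W J = 0$ is equivalent, for real $W$, to the condition that $[W,\cdot]$ preserves $T^{1,0}M$. Decompose $W = \xi + \bar{\xi}$ with $\xi = \tfrac{W - iJW}{2} \in \Gamma(T^{1,0}M)$. Since $J$ is integrable we have the Newlander--Nirenberg property $[T^{1,0}M, T^{1,0}M] \subseteq T^{1,0}M$, so $[\xi, Z] \in \Gamma(T^{1,0}M)$ automatically for every $(1,0)$ field $Z$. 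Thus $\mathcal{L}_W J = 0$ if and only if $[\bar{\xi}, Z] \in \Gamma(T^{1,0}M)$ for every $Z \in \Gamma(T^{1,0}M)$.

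The final step is a local computation. Writing $\xi = W^k \partial_{z^k}$ and $Z = f^j \partial_{z^j}$, expand
\[
    [\bar{\xi}, Z] \;=\; \bar{W}^k (\partial_{\bar{z}^k} f^j)\, \partial_{z^j} \;-\; f^j (\partial_{z^j} \bar{W}^k)\, \partial_{\bar{z}^k}.
\]
The first term already lies in $T^{1,0}M$, so the bracket is of type $(1,0)$ for all choices of $f^j$ precisely when $\partial_{z^j} \bar{W}^k = 0$ for every $j,k$, i.e., each $W^k$ is holomorphic. This is condition (3), and together with (2)$\Leftrightarrow$(3) completes the chain.

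The main obstacle is mostly bookkeeping: being careful with the factors of $\tfrac{1}{2}$ and the interplay between real and complexified coefficients, and verifying that the identification used in (2) really matches $p \mapsto (W^j(p))$ in the $\partial_{z^j}$-basis so that the equivalence with (3) is clean. The use of integrability of $J$ to dispose of the $[\xi,Z]$ term is the only structural input, and it is exactly the point where the argument would fail in the merely almost complex setting, which fits the broader theme of the paper.
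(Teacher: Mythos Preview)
Your proof is correct. Both you and the paper reduce everything to the holomorphicity of the local coefficients $W^k = a^k + ib^k$, and your treatment of (2)$\Leftrightarrow$(3) is essentially the same as the paper's. The difference lies in (1)$\Leftrightarrow$(3): the paper proceeds by a direct real-coordinate computation of $(\mathcal{L}_W J)(\partial_{x^\ell})$, reading off the Cauchy--Riemann equations $\partial_{x^\ell} a^k = \partial_{y^\ell} b^k$ and $\partial_{y^\ell} a^k = -\partial_{x^\ell} b^k$, and then matches these against the components of $\bar{\partial}\bigl(\tfrac{W-iJW}{2}\bigr)$. You instead pass to the complexified tangent bundle, rephrase $\mathcal{L}_W J = 0$ as preservation of $T^{1,0}M$ under $[W,\cdot]$, and use integrability to discard the $[\xi,Z]$ piece before computing $[\bar{\xi},Z]$. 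Your route is slightly more conceptual and makes the role of integrability explicit --- which is a nice thematic payoff, since the condition $[\bar{\xi},Z]^{1,0}=0$ you isolate is precisely the pseudo-holomorphicity condition the paper introduces later for the almost complex case. The paper's route is more elementary and avoids any appeal to Newlander--Nirenberg-type statements. Both arguments are short and either is perfectly acceptable here.
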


\begin{proof}
    Since $J$ is integrable, we have a local \emph{coordinate} frame $\set{\frac{\partial}{\partial x^k}, \frac{\partial}{\partial y^k} \mid 1 \leq k \leq n}$ so that $J \frac{\partial}{\partial x^k} = \frac{\partial}{\partial y^k}$. We may then locally write $W = a^k \frac{\partial}{\partial x^k}+b^k \frac{\partial}{\partial y^k}$. A direct computation yields
    \begin{gather*}
        (\pounds_{W}J) \left(\frac{\partial}{\partial x^k} \right) =  \left[ W,J \frac{\partial}{\partial x^\ell} \right] -J \left[ W, \frac{\partial}{\partial x^\ell} \right]\\
        =\left[ a^k \frac{\partial}{\partial x^k}+b^k \frac{\partial}{\partial y^k}, \frac{\partial}{\partial y^\ell} \right]-J \left( -\frac{\partial a^k}{\partial x^\ell} \frac{\partial}{\partial x^k}-\frac{\partial b^k}{\partial x^\ell} \frac{\partial}{\partial y^k} \right) \\
        =-\frac{\partial a^k}{\partial y^\ell} \frac{\partial}{\partial x^k}-\frac{\partial b^k}{\partial y^\ell} \frac{\partial}{\partial y^k} -\left(-\frac{\partial a^k}{\partial x^\ell} \frac{\partial}{\partial y^k}+\frac{\partial b^k}{\partial x^\ell} \frac{\partial}{\partial x^k} \right) \\
        =\left(\frac{\partial a^k}{\partial x^\ell}-\frac{\partial b^k}{\partial y^\ell}   \right) \frac{\partial}{\partial y^k}- \left(\frac{\partial a^k}{\partial y^\ell} + \frac{\partial b^k}{\partial x^\ell} \right) \frac{\partial}{\partial x^k}.
    \end{gather*}
    This shows the equivalence between the first two conditions. We also note that applying the Lie derivative to each $\frac{\partial}{\partial y^\ell}$ instead gives us the same set of equations, so our first condition is not any stronger. Next, we establish the equivalence between the second and third conditions.
    \begin{gather*}
        \frac{W-iJW}{2} = \frac{1}{2} \left[ a^k \frac{\partial}{\partial x^k}+b^k \frac{\partial}{\partial y^k} -i a^k \frac{\partial}{\partial y^k} +i b^k \frac{\partial}{\partial x^k} \right] =\frac{\left[a^k+i b^k \right]}{2} \frac{\partial}{\partial z^k}
    \end{gather*}
    Our desired result follows from the following computation.
    \begin{gather*}
        \bar{\partial} \left(\frac{W-iJW}{2}\right)=
        \frac{1}{2} \frac{\partial (a^k+ib^k)}{\bar{z}^\ell}  d\bar{z}^\ell \otimes \frac{\partial}{\partial z^k} \\
        =\frac{1}{2} \left[ \left( \frac{\partial a^k}{\partial x^\ell} - \frac{\partial b^k}{\partial y^\ell} \right)+i\left( \frac{\partial a^k}{\partial y^\ell} + \frac{\partial b^k}{\partial x^\ell} \right) \right] d\bar{z}^\ell \otimes  \frac{\partial}{\partial z^k}
    \end{gather*}
\end{proof}

\noindent
With this in mind, we may use the term holomorphic to qualify real or complex vector fields, applying the appropriate definition in context.

\begin{remark}
\label{conn1}
    There are a few distinguished affine connections on $TM$ that can be used.
    \begin{enumerate}
    
        \item
        There is a unique torsion-free connection $\nabla^{LC}$, for which $\nabla^{LC} g=0$. This is called the \textit{Levi-Civita connection}.
        
        \item
        If $J$ is integrable, recall that $\bar{\partial}$ can be extended to act on vector fields. Then there is a unique connection $\nabla^{C}$, called the \textit{Chern connection}, for which $\nabla^{C} g = 0$ and $ \left[\nabla^{C}\right]^{0,1} = \bar{\partial}$.
        
        \item
        There is a unique connection $\nabla^{AC}$ for which $\nabla^{AC} g=0$, $\nabla^{AC} J=0$, and the corresponding torsion has vanishing (1,1) component. This connection is typically called the canonical connection, though we shall often use the term \textit{almost Chern connection}.
    \end{enumerate}
\end{remark}
\noindent
For our work on almost Hermitian manifolds, the last connection is more useful than the first. So henceforth, $\nabla$ will refer to the almost Chern connection, unless there is a superscript that indicates otherwise.

We may also consider what relations exist between these connections, and when they coincide. To understand the situation better, we need the following definition.
    
\begin{definition}
    An almost Hermitian manifold $(M,J,g)$ for which $\nabla^{LC}J=0$ is a \emph{K{\"a}hler manifold}.
\end{definition}

\noindent
When $M$ is K{\"a}hler, or even just complex, matters simplify considerably.

\begin{proposition}
\label{conn2}
    Let $\Omega = gJ \coloneqq g(J\cdot,\cdot)$.
    \begin{enumerate}
        
        \item
        If $\nabla^{LC} J=0$, then the Nijenhuis tensor $N_J$ vanishes, meaning $M$ is a complex manifold by the Newlander-Nirenberg Theorem.
        
        \item
        $\nabla^{LC} \Omega=0$ iff $\nabla^{LC} J=0$.
        
        \item
        $\nabla^{LC} J=0 \Rightarrow d\Omega=0$.
        
        \item
        If $N_J=0$, then $d\Omega=0 \iff \nabla^{LC} \Omega=0  $
        
        \item
        The Levi-Civita and Chern connections coincide iff $M$ is K{\"a}hler.
        
        \item
        If $M$ is complex, then the almost Chern and Chern connections coincide. 
    \end{enumerate}
\end{proposition}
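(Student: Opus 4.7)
The plan is to dispatch most parts by direct computation using $\nabla^{LC} g = 0$ and torsion-freeness, and to handle (5)--(6) via the uniqueness characterizations in Remark \ref{conn1}. Since $\Omega = g(J\cdot,\cdot)$ and $\nabla^{LC}$ is metric-compatible, one has $(\nabla^{LC}_X \Omega)(Y,Z) = g((\nabla^{LC}_X J)Y, Z)$, so non-degeneracy of $g$ gives (2) at once. For (3), I would combine (2) with the standard identity $d\omega = \mathrm{Alt}(\nabla \omega)$, valid for any torsion-free connection $\nabla$ and any form $\omega$. For (1), I would expand the Nijenhuis tensor
\[
N_J(X,Y) = [JX,JY] - J[JX,Y] - J[X,JY] - [X,Y]
\]
using $[U,V] = \nabla^{LC}_U V - \nabla^{LC}_V U$ (valid by torsion-freeness), then commute $J$ past each covariant derivative via $\nabla^{LC} J = 0$; all terms cancel pairwise, yielding $N_J \equiv 0$, and Newlander--Nirenberg upgrades this to integrability.

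The main obstacle is the nontrivial direction of (4): assuming $N_J = 0$ and $d\Omega = 0$, deduce $\nabla^{LC}\Omega = 0$. The standard tool is a Gauduchon-type pointwise identity of the form
\[
2g((\nabla^{LC}_X J)Y, Z) = d\Omega(X, JY, JZ) - d\Omega(X, Y, Z) + g(N_J(Y,Z), JX),
\]
whose derivation combines the Koszul formula for $\nabla^{LC}$, the compatibility $g(J\cdot,J\cdot) = g$, and careful bookkeeping of type decompositions; this is the single most laborious step. Once established, both hypotheses force the right-hand side to vanish, giving $\nabla^{LC} J = 0$ and hence $\nabla^{LC}\Omega = 0$ via (2). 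The reverse implication is just (3) combined with (2).

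Finally, (5) and (6) reduce to uniqueness. For (6), on a complex manifold both $\nabla^C$ and $\nabla^{AC}$ preserve $g$ and $J$---the Chern connection preserves $J$ because $\bar\partial$ sends $T^{1,0}$-valued forms to $T^{1,0}$-valued forms, so $\nabla^C$ preserves the type splitting---and a direct coordinate computation in a holomorphic chart using $\nabla^C_i \partial_{\bar\jmath} = 0$ shows that the torsion of $\nabla^C$ vanishes on mixed pairs $(Z,\bar W)$, i.e., its $(1,1)$ component is zero; uniqueness of $\nabla^{AC}$ then forces $\nabla^C = \nabla^{AC}$. For (5), if $\nabla^{LC} = \nabla^C$ then $\nabla^{LC} J = \nabla^C J = 0$, so $M$ is K{\"a}hler; conversely, if $M$ is K{\"a}hler, then (1) gives integrability and $\nabla^{LC}$ is a metric, $J$-preserving, torsion-free connection, whose torsion thus trivially has vanishing $(1,1)$ component, so uniqueness of $\nabla^{AC}$ together with (6) yields $\nabla^{LC} = \nabla^{AC} = \nabla^C$.
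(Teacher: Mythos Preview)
Your proposal is correct and follows the standard route; the paper, however, does not prove this proposition at all. Immediately after stating Proposition~\ref{conn2}, the author writes: ``Many of the statements in Proposition~\ref{conn2} follow quite readily, but we will not give the proofs here. The reader may consult \cite[\S 5]{moroianu2007lectures}.'' So there is no proof in the paper to compare against---your outline is strictly more than what the paper supplies.

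Your arguments for (1)--(3) are the usual ones and are fine. For (4), the pointwise identity you invoke (up to sign conventions) is exactly the formula one finds in Moroianu's notes or in Kobayashi--Nomizu, and it does the job. For (5)--(6), leveraging the uniqueness characterizations from Remark~\ref{conn1} is the cleanest path; your chain $\nabla^{LC}=\nabla^{AC}=\nabla^{C}$ in the K{\"a}hler case is correct. One small point worth making explicit in (5): the Chern connection is only defined once $J$ is integrable, so the very statement ``$\nabla^{LC}=\nabla^{C}$'' already presupposes $M$ is complex; your forward direction then reduces to $\nabla^{LC}J=\nabla^{C}J=0$, which is immediate. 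Otherwise the outline is complete.
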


\noindent
Many of the statements in Proposition \ref{conn2} follow quite readily, but we will not give the proofs here. The reader may consult \cite[$\S$5]{moroianu2007lectures}. We also have the following simplification in the K{\"a}hler case.

\begin{proposition}
\label{hol2}
    Suppose $M$ is K{\"a}hler and $W \in \Gamma(TM)$. Then $\pounds_W J=0$ iff $[J,\nabla W]=0$. 
\end{proposition}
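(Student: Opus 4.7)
The plan is to convert both sides of the claimed equivalence into a pointwise identity of $(1,1)$-tensors, namely $\pounds_W J = [J, \nabla W]$, where we regard $\nabla W$ as the endomorphism $X \mapsto \nabla_X W$ of $TM$. The equivalence then follows immediately. The two ingredients I need to pull out of the K\"ahler hypothesis are:
\begin{enumerate}
    \item By Proposition \ref{conn2}, since $M$ is K\"ahler, the almost Chern connection $\nabla$ coincides with the Levi-Civita connection; in particular $\nabla$ is torsion-free.
    \item K\"ahler means $\nabla^{LC} J = 0$, and hence $\nabla J = 0$.
\end{enumerate}

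The main calculation is a short one. Starting from the definition
\[
(\pounds_W J)(X) = [W, JX] - J[W, X],
\]
I would replace each Lie bracket by covariant derivatives using the torsion-free property from (1):
\[
[W, JX] = \nabla_W(JX) - \nabla_{JX} W, \qquad [W, X] = \nabla_W X - \nabla_X W.
\]
Substituting and using the product rule,
\[
(\pounds_W J)(X) = (\nabla_W J)(X) + J(\nabla_W X) - \nabla_{JX} W - J\nabla_W X + J\nabla_X W.
\]
The two $J \nabla_W X$ terms cancel, and (2) kills the $(\nabla_W J)(X)$ term, leaving
\[
(\pounds_W J)(X) = J(\nabla_X W) - \nabla_{JX} W = [J, \nabla W](X).
\]
This is a tensorial identity, so $\pounds_W J = 0$ if and only if $[J, \nabla W] = 0$.

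There is no real obstacle here beyond bookkeeping: the proof is essentially a single application of each of the two K\"ahler properties, and the only thing to be careful about is the sign convention in viewing $\nabla W$ as the endomorphism $X \mapsto \nabla_X W$ (so that $[J, \nabla W](X) = J \nabla_X W - \nabla_{JX} W$), which is precisely what makes the final line match. I would also briefly remark that without the K\"ahler hypothesis, an extra $(\nabla_W J)(X)$ term and torsion terms would appear, showing why the simplification is genuinely K\"ahler-specific.
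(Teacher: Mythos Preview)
Your proof is correct and follows essentially the same route as the paper's: both expand $(\pounds_W J)(X)=[W,JX]-J[W,X]$ using torsion-freeness of the Levi-Civita connection and then invoke $\nabla J=0$ to reduce to $J\nabla_X W-\nabla_{JX}W=[J,\nabla W](X)$. The only difference is that you make the use of Proposition~\ref{conn2} (to identify $\nabla$ with $\nabla^{LC}$) explicit, whereas the paper leaves this implicit.
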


\begin{proof}
    Pick any other vector field $X$. Since $J$ is parallel,
    \begin{gather*}
        (\pounds_W J)(X)=[W,JX]-J[W,X] \\
        =\nabla_W (JX) - \nabla_{JX} W  -J(\nabla_W X- \nabla_{X} W)\\
        =J \nabla_X W - \nabla_{JX} W = (J \circ \nabla W ) (X)- (\nabla W \circ J)(X) =  [J,\nabla W](X).
    \end{gather*}
    The result follows.
\end{proof}

\begin{proposition}
\label{del2}
    Using the almost Chern connection, $\ddbar f(e_j,\bar{e}_k)=\nabla^2 f(e_j, \bar{e}_k)$, where $e_j$ and $e_k$ are $(1,0)$-vectors.
\end{proposition}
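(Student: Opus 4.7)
The plan is to expand both sides in terms of the formulas from Remark \ref{del} and the definition of the Hessian, then invoke the two defining properties of the almost Chern connection ($\nabla J=0$ and vanishing $(1,1)$-torsion) to match the correction terms.

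First I would rewrite the left-hand side using the projection formula from Remark \ref{del}:
\begin{gather*}
    \partial\bar\partial f(e_j,\bar e_k) = \tfrac{1}{2}\bigl[d\bar\partial f(e_j,\bar e_k)+d\bar\partial f(Je_j,J\bar e_k)\bigr].
\end{gather*}
Since $Je_j=ie_j$ and $J\bar e_k=-i\bar e_k$, the two terms are equal and one gets $\partial\bar\partial f(e_j,\bar e_k)=d\bar\partial f(e_j,\bar e_k)$. I would then apply the usual Cartan formula $d\omega(X,Y)=X\omega(Y)-Y\omega(X)-\omega([X,Y])$ to the $1$-form $\bar\partial f$, and use the formula $\bar\partial f(X)=\tfrac{1}{2}(df(X)+i\,df(JX))$ to check that $\bar\partial f$ vanishes on $(1,0)$-vectors and acts as $df$ on $(0,1)$-vectors. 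This will reduce the left-hand side to
\begin{gather*}
    \partial\bar\partial f(e_j,\bar e_k)=e_j(\bar e_k f)-[e_j,\bar e_k]^{0,1}f,
\end{gather*}
where the superscript denotes the $(0,1)$-projection. On the right, the Hessian unfolds as $\nabla^2 f(e_j,\bar e_k)=e_j(\bar e_k f)-(\nabla_{e_j}\bar e_k)f$, so the equality reduces to the identity $\nabla_{e_j}\bar e_k=[e_j,\bar e_k]^{0,1}$.

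The main (and really only) obstacle is this last identity, and it is where both defining properties of $\nabla$ enter. Since $\nabla J=0$, covariant differentiation preserves the type decomposition, so $\nabla_{e_j}\bar e_k$ is of type $(0,1)$ and $\nabla_{\bar e_k}e_j$ is of type $(1,0)$. The vanishing of the $(1,1)$-component of the torsion gives $T(e_j,\bar e_k)=0$, i.e.
\begin{gather*}
    \nabla_{e_j}\bar e_k-\nabla_{\bar e_k}e_j=[e_j,\bar e_k].
\end{gather*}
Projecting onto the $(0,1)$-part and using that $\nabla_{\bar e_k}e_j$ is purely of type $(1,0)$ isolates $\nabla_{e_j}\bar e_k=[e_j,\bar e_k]^{0,1}$, which closes the argument. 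In short, the proposition is essentially the statement that the almost Chern connection is as close to the Levi-Civita/K\"ahler connection as possible when restricted to mixed-type arguments, and the proof is a direct unwinding of definitions once these two torsion and parallelism properties are invoked.
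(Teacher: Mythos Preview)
Your proof is correct and follows essentially the same route as the paper. Both arguments reduce $\partial\bar\partial f(e_j,\bar e_k)$ to $d\bar\partial f(e_j,\bar e_k)$ via the type projection, apply the Cartan formula, and then use $\nabla J=0$ together with $\Theta^{1,1}=0$ to identify the correction term. The only cosmetic difference is that you isolate the identity $\nabla_{e_j}\bar e_k=[e_j,\bar e_k]^{0,1}$ explicitly, whereas the paper applies $\bar\partial f$ directly to $[e_j,\bar e_k]=\nabla_{e_j}\bar e_k-\nabla_{\bar e_k}e_j$ and lets the type of each summand do the work inline.
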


\begin{proof}
    Select a local $(1,0)$-frame $\set{e_j}$ (with corresponding coframe $\set{\theta^j}$), Using the formula in Definition $\ref{del}$,
    \begin{gather*}
        \ddbar f (e_j, \bar{e}_k) = e_j (\bar{\partial}f(\bar{e}_k))-\bar{e}_k ( \bar{\partial}f(e_j))-\bar{\partial} f ([e_j, \bar{e}_k]) \\
        = e_j (\bar{e}_k (f))-0-\bar{\partial} f (\nabla_{e_j}\bar{e}_k - \nabla_{\bar{e}_k} e_j - \Theta(e_j,\bar{e}_k)) \\
        =e_j (\bar{e}_k (f))-\bar{\partial} f (\nabla_{e_j}\bar{e}_k)=e_j (\bar{e}_k (f))- (\nabla_{e_j}\bar{e}_k)(f)=\nabla^2 f(e_j, \bar{e}_k).
    \end{gather*}
\end{proof}

\noindent
In our main proof, we will utilize a particular complex manifold: the projective bundle. Let us now introduce it.

\begin{remark}
\label{proj}
    Let $(M^{2n},J,g)$ be an almost Hermitian manifold.
    \begin{enumerate}
    
        \item
        Then there is an induced almost complex structure, $J_{TM}$ on $TM$.
        
        \item
        We may consider $TM\backslash \set{0}$ and take a quotient under multiplication by scalars and by $J$. This manifold shall be referred to as the projective bundle and denoted by $\mb{P}(TM)$.
        
        \item
        $J_{TM}$ naturally descends to an almost complex structure $J_{P}$ on $\mb{P}(TM)$.
        
    \end{enumerate}
\end{remark}

It is important to keep in mind that it is possible to express most, if not all, of the definitions, formulas, and proofs presented using nothing but real coordinates. Indeed, we have already seen this in Proposition \ref{holeq}. For the most part, we will stick to the complexified tangent bundle, though at times, mostly for clarity, we will state corresponding implications or distinctions in real coordinates.

\section{Pseudo and Quasi Holomorphic Vector Fields}

Earlier, we used an arbitrary $(1,0)$-frame which, considering the isomorphism between $T^{1,0}M$ and $TM$, is completely arbitrary. We wish to be selective with our frame to simplify computations. And although we do not have local holomorphic vector fields, we can mimic this behavior at a single point.

\begin{definition}
\label{pseudo1}
    Let $M$ be an almost Hermitian manifold.
    \begin{enumerate}
    
        \item 
        A $(1,0)$-vector field $W$ is said to \emph{pseudo holomorphic at $p \in M$} if $[\bar{X},W]^{1,0}(p)=0$ for any $(1,0)$-vector field $X$.
        
        \item
        A vector field $W$ is \emph{quasi holomorphic at $p \in M$} if it is psuedo holomorphic at $p$ and $[X,[\bar{Y},W]]^{1,0}(p)=0$ for any $(1,0)$-vector fields $X$ and $Y$ that are pseudo holomorphic at $p$.
        
        \item
        A vector field $W$ is \emph{normal pseudo (quasi) holomorphic at $p \in M$} if it is pseudo (quasi) holomorphic at $p$ and has the additional property that $(\nabla W)(p)=0$.
        
    \end{enumerate}
\end{definition}

\noindent
Of course, such frames do not help us if we are unable find them. With this in mind, we have the following proposition. A complete proof may be found in \cite{2012arXiv1209.5078Y}.

\begin{proposition}
\label{pseudo2}
    Let $M$ be an almost Hermitian manifold
    \begin{enumerate}
        
        \item 
        At an point $p \in M$, there exists a local $(1,0)$-frame that is pseudo holomorphic at $p$.
        
        \item
        At an point $p \in M$, there exists a local $(1,0)$-frame that is quasi holomorphic at $p$.
        
        \item
        Moreover, we may choose to make either of the frames above normal at $p$ as well.
    \end{enumerate}
\end{proposition}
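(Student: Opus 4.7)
The plan is to realize each frame as a change-of-frame modification of an arbitrary $(1,0)$-frame $\{e_i\}$ around $p$, exploiting the fact that each of the three desired conditions constrains only a specific order of the Taylor jet of the change-of-frame matrix $A = (a^i_j)$ at $p$, which may be freely prescribed.

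\textbf{Part (1) together with normality.} Seek $\tilde{e}_j = a^i_j e_i$ with $A(p) = I$. Since $[\bar{X}, W]^{1,0}$ is $C^\infty$-linear in $\bar{X}$ (the Leibniz correction $-W(\bar{f})\bar{X}$ is $(0,1)$-valued and hence drops), the condition $[\bar{X}, \tilde{e}_j]^{1,0}(p) = 0$ reduces to a linear equation on the $\bar{\partial}$-part of the $1$-jet of $A$ at $p$, namely $\bar{e}_\ell(a^i_j)(p) = -c^i_{j\bar{\ell}}(p)$, where the structure functions $c^i_{j\bar{\ell}}$ are defined by $[\bar{e}_\ell, e_j]^{1,0} = c^i_{j\bar{\ell}} e_i$. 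This can be achieved by a first-order Taylor adjustment in local real coordinates. For the normal refinement, independently prescribe the $\partial$-part of the $1$-jet to arrange $\nabla_{e_\ell}\tilde{e}_j(p) = 0$. The only compatibility to verify is that the normality equation $\bar{e}_\ell(a^i_j)(p) = -\omega^i_j(\bar{e}_\ell)(p)$ agrees with the pseudo equation; this follows from the defining property $T^{1,1} = 0$ of the almost Chern connection which, combined with $\nabla J = 0$, yields $\nabla_{\bar{e}_\ell} e_j = [\bar{e}_\ell, e_j]^{1,0}$ and hence $\omega^i_j(\bar{e}_\ell)(p) = c^i_{j\bar{\ell}}(p)$.

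\textbf{Part (2) together with normality.} Starting from a normal pseudo holomorphic frame $\{e_i\}$ produced above, further modify by $\tilde{e}_j = b^i_j e_i$ with $B(p) = I$ and the full $1$-jet of $b^i_j$ at $p$ trivial; this automatically preserves both pseudo holomorphicity and normality. Expanding $[X, [\bar{Y}, \tilde{e}_j]]^{1,0}(p)$ for $X, Y$ pseudo holomorphic at $p$, the pseudo property of $\{e_i\}$ annihilates the terms involving $[\bar{Y}, e_i]^{1,0}(p)$, and the trivial $1$-jet of $b$ annihilates all first-order contributions. What remains is a linear condition on the mixed second derivatives $(e_\ell \bar{e}_m)(b^i_j)(p)$, which lies in the freely prescribable $2$-jet of $b$ at $p$, and is therefore solvable.

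\textbf{Main obstacle.} The principal subtlety is the compatibility between normality and the holomorphicity conditions; this is not accidental but is exactly what the axioms $\nabla J = 0$ and $T^{1,1} = 0$ of the almost Chern connection were designed to ensure, and this observation is what singles out the canonical connection as the correct ambient tool. A second, more technical point is that, unlike the pseudo condition, the quasi condition fails to be $C^\infty$-linear in $(X, Y)$, so one must check that within the admissible class of fields pseudo holomorphic at $p$ the quantity $[X, [\bar{Y}, \tilde{e}_j]]^{1,0}(p)$ depends only on the values $X(p), Y(p) \in T^{1,0}_p M$. This reduction uses that multiplication by $f$ with $f(p) = 1$ and $\bar{\partial}f(p) = 0$ preserves pseudo holomorphicity at $p$, and that the resulting error term $[\bar{Y}, e_j](f)(p) \cdot X(p)$ vanishes precisely because $\{e_j\}$ is pseudo holomorphic, so the condition genuinely depends only on the $2$-jet of $b$ at $p$ and not on the choice of extensions.
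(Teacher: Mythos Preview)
The paper does not actually supply a proof of this proposition; it only remarks that ``a complete proof may be found in \cite{2012arXiv1209.5078Y}'' and moves on. So there is no in-paper argument to compare against. Your jet-prescription strategy is the standard one and is essentially what appears in that reference: modify an arbitrary $(1,0)$-frame by a $\GL$-valued function whose Taylor coefficients at $p$ are chosen order by order to kill the relevant obstructions. Your key structural observation---that $\nabla J=0$ together with $T^{1,1}=0$ forces $\nabla_{\bar{e}_\ell}e_j=[\bar{e}_\ell,e_j]^{1,0}$, so that pseudo holomorphicity at $p$ is \emph{equivalent} to the vanishing of the $(0,1)$-part of $\nabla e_j$ at $p$---is exactly right, and it explains both why normality subsumes pseudo holomorphicity and why the almost Chern connection is the natural setting here.

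One small point: your tensoriality discussion for the quasi condition is slightly underspecified. Showing invariance under $X\mapsto fX$ with $f(p)=1$, $\bar\partial f(p)=0$ is not by itself enough to conclude that $[X,[\bar Y,\tilde e_j]]^{1,0}(p)$ depends only on $X(p),Y(p)$; one must also handle additive changes. The clean way is to write a general pseudo holomorphic $X=a^k e_k$ (so $\bar e_\ell(a^k)(p)=0$) and expand $[X,[\bar Y,\tilde e_j]]^{1,0}(p)$ directly: every cross term is killed either by $[\bar e_\ell,\tilde e_j]^{1,0}(p)=0$, by $\bar e_m(a^k)(p)=0$, or by $e_j(\bar b^\ell)(p)=0$, leaving only $a^k(p)\,\bar b^\ell(p)\,[e_k,[\bar e_\ell,\tilde e_j]]^{1,0}(p)$. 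That closes the loop and confirms that prescribing the mixed second jet $e_k\bar e_\ell(b^i_j)(p)$ suffices.
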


Recall that we already have the notion of a holomorphic vector field (real or complex) in Definition \ref{hol1}. And since we desire a property that focuses on a single point, we could have altered one of those definitions instead. As we shall see shortly, the vector fields in Definition \ref{pseudo1} will be more useful to us. However, a relation between the two exists:

\begin{proposition}
\label{pseudo3}
    Let $X \in \Gamma(TM)$ and suppose $N_J=0$. Then $\pounds_X J(p)=0$ iff $X-iJX$ is pseudo holomorphic at $p$.
\end{proposition}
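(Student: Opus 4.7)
The plan is to compute $(\pounds_X J)(Y)$ for an arbitrary $(1,0)$ vector field $Y$, rewrite the result in terms of $W = X - iJX$, and invoke the integrability hypothesis $N_J = 0$ to match the pseudo holomorphic condition. Note that $W$ is indeed $(1,0)$ since $JW = iW$, and $X = \tfrac{1}{2}(W + \bar{W})$. Using $JY = iY$,
\begin{gather*}
(\pounds_X J)(Y) = [X, JY] - J[X, Y] = (i\,\id - J)[X, Y].
\end{gather*}
Splitting $[X, Y] = [X, Y]^{1,0} + [X, Y]^{0,1}$, the operator $i\,\id - J$ annihilates the $(1,0)$ summand (where $J = i\,\id$) and scales the $(0,1)$ summand by $2i$, yielding $(\pounds_X J)(Y) = 2i\,[X, Y]^{0,1}$.

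Next, I would substitute $X = \tfrac{1}{2}(W + \bar{W})$ to get $[X, Y]^{0,1} = \tfrac{1}{2}[W, Y]^{0,1} + \tfrac{1}{2}[\bar{W}, Y]^{0,1}$. Here is the one essential use of the hypothesis: $N_J = 0$ gives (by Newlander--Nirenberg) $[T^{1,0}M, T^{1,0}M] \subset T^{1,0}M$, so $[W, Y]^{0,1} = 0$, leaving $(\pounds_X J)(Y) = i\,[\bar{W}, Y]^{0,1}$. To match the exact form in Definition \ref{pseudo1}, conjugate: using $\overline{V^{1,0}} = \bar{V}^{0,1}$ together with antisymmetry of the bracket, $[\bar{W}, Y]^{0,1} = \overline{[W, \bar{Y}]^{1,0}} = -\,\overline{[\bar{Y}, W]^{1,0}}$. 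Therefore $(\pounds_X J)(Y)(p) = 0$ for every $(1,0)$ vector field $Y$ if and only if $[\bar{Y}, W]^{1,0}(p) = 0$ for every such $Y$, which is exactly the definition of $W$ being pseudo holomorphic at $p$.

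The last step is to promote \emph{vanishing on all $(1,0)$ vectors at $p$} to \emph{vanishing as a tensor at $p$}. Since $\pounds_X J$ is the Lie derivative of a real tensor along a real vector field, it is itself a real tensor, so its value at $p$ on $\bar{Y}$ is the conjugate of its value on $Y$. Hence vanishing on $T^{1,0}_pM$ forces vanishing on $T^{0,1}_pM$ and therefore on all of $T_pM \otimes \mathbb{C}$, so $(\pounds_X J)(p) = 0$.

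The only real obstacle is bookkeeping: several nearly identical brackets and projections appear, and one must carefully reconcile the naturally occurring $[\bar{W}, Y]^{0,1}$ with the definitional $[\bar{Y}, W]^{1,0}$ via the conjugation identity above. Integrability enters precisely once, in dropping $[W, Y]^{0,1}$; without it the discarded term is, up to a constant, a value of the Nijenhuis tensor, which explains why an integrability assumption is needed for the equivalence to hold.
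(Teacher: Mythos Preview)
Your proof is correct and notably cleaner than the paper's. The paper works in real frames and uses the canonical connection: it first computes $[\bar V,W]^{1,0}$ via $\nabla$ to obtain the real condition $\nabla_Y X = -\nabla_{JY}(JX)$, then expresses $N_J$ through the torsion of $\nabla$ (using that $T^{1,1}=0$) to simplify $(\pounds_X J)(Y)$ to $\nabla_Y(JX)-\nabla_{JY}X$, and finally matches the two conditions. You bypass the connection entirely, working directly in $T^{1,0}\oplus T^{0,1}$ with the identity $(\pounds_X J)(Y)=2i[X,Y]^{0,1}$ and the involutivity $[T^{1,0},T^{1,0}]\subset T^{1,0}$. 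Your route is shorter and more conceptual; the paper's route has the minor pedagogical benefit of producing explicit real formulas in terms of $\nabla$, which fits its later emphasis on canonical-connection computations. One small quibble: the involutivity of $T^{1,0}$ is \emph{equivalent} to $N_J=0$ by a direct algebraic computation, so invoking Newlander--Nirenberg there is unnecessary (and mildly circular, since involutivity is the hypothesis of that theorem).
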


\begin{proof}
    We now translate some of our conditions from complex frames to real frames. Let us write $W=X-iJX$ and $V=Y-iJY$, and suppress the point of evaluation.
    \begin{gather*}
        \pounds_{\bar{V}} W= [\bar{V},W]^{1,0}=[\nabla_{\bar{V}}W-\nabla_W \bar{V}]^{1,0}=[\nabla_{\bar{V}}W] \\
        =\nabla_{Y+iJY}(X-iJX)=\nabla_Y X+\nabla_{JY} (JX)+i[\nabla_{JY} X -\nabla_Y (JX)]
    \end{gather*}
    And so we see that for $X$ to be the real part of a pseudo holomorphic vector field, it must be that case that (at $p$) $\nabla_Y X = - \nabla_{JY} (JX)$ for all real vector fields $Y$. Note that $\nabla_{JY} X = \nabla_{Y} (JX)$ follows from the first condition by replacing $X$ with $JX$. 
    
    On the other hand, we may express the Nijenhuis tensor using the torsion of any connection that makes $J$ parallel (see \cite{boris}).
    \begin{gather*}
        0=N(X,Y)=[JX,JY]-J[JX,Y]-J[X,JY]-[X,Y] \\
        = T(X,Y)+JT(JX,Y)+JT(X,JY)-T(JX,JY)
    \end{gather*}
    However, there is a benefit of using the canonical connection among all such connections. \begin{gather*}
        T(A-iJA,B+iJB)= \\
        \left[ T(A,B)+T(JA,JB) \right] +i\left[ T(A,JB)-T(JA,B) \right]
    \end{gather*}
    We see that $T^{1,1}$ vanishing means that composing with $J$ in both entries yields a minus sign. We may use this to simplify our expression for $N$.
    \begin{gather*}
        \frac{1}{2}N(X,Y)=T(X,Y)+JT(X,JY)=0
    \end{gather*}
    Finally, we turn our attention to our defining equation for an automorphic vector field.
    \begin{gather*}
        (\pounds_X J)(Y)=[X,JY]-J[X,Y] \\
        =\nabla_X (JY)-\nabla_{JY} X -T(X,JY)-J\nabla_X Y +J \nabla_Y X+JT(X,Y) \\
        = J \nabla_Y X -\nabla_{JY} X -T(X,JY)+JT(X,Y) \\
        = \nabla_Y (JX)-\nabla_{JY} X + \frac{1}{2} J(N(X,Y))=\nabla_Y (JX)-\nabla_{JY} X
    \end{gather*}
    We see that $X$ is automorphic at $p$ iff $\nabla_Y (JX)-\nabla_{JY} X$, which is the same condition we found earlier.
    
\end{proof}

\begin{corollary}
\label{hol3}
    If $M$ is complex and $W$ is pseudo holomorphic at every point, then $W$ is holomorphic.
\end{corollary}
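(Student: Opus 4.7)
The plan is to reduce the statement to Proposition \ref{pseudo3} and Proposition \ref{holeq}, which together give a pointwise correspondence between pseudo holomorphy, automorphy, and complex-holomorphy (when $J$ is integrable).

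First, I would express $W$ in the form $W = X - iJX$ for some real vector field $X$. This is automatic: since $W$ is a $(1,0)$-vector field, $JW = iW$, and writing $W = A + iB$ for real $A, B$, the equation $JA + iJB = iA - B$ forces $B = -JA$, so indeed $W = X - iJX$ with $X = \mathrm{Re}(W)$. Next, since $M$ is a complex manifold, the Nijenhuis tensor $N_J$ vanishes, so the hypothesis of Proposition \ref{pseudo3} is in force at every point $p \in M$.

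Applying Proposition \ref{pseudo3} pointwise, the assumption that $W = X - iJX$ is pseudo holomorphic at every $p \in M$ becomes the assertion $(\pounds_X J)(p) = 0$ for every $p$, i.e., $\pounds_X J \equiv 0$. Thus $X$ is automorphic, so by Proposition \ref{holeq} (equivalence of (1) and (3)), the complex vector field $(X - iJX)/2 = W/2$ is complex-holomorphic. By linearity, $W$ itself is complex-holomorphic, which is the content of ``$W$ is holomorphic'' for complex vector fields under the convention established after Proposition \ref{holeq}.

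There is no real obstacle; everything is packaged into the two propositions. The only mild subtlety is bookkeeping the various meanings of ``holomorphic'' — namely, matching the $(1,0)$-vector field $W$ against its unique real $X$ with $W = X - iJX$, and remembering that the paper's convention permits us to conclude with either formulation. If one wished to state the conclusion in the real form, one would instead appeal to the equivalence of (1) and (2) in Proposition \ref{holeq} to conclude that $X = \mathrm{Re}(W)$ is holomorphic as a real vector field.
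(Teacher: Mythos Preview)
Your proof is correct and follows exactly the route the paper intends: the paper's own proof simply says ``This follows immediately from Propositions \ref{pseudo3} and \ref{holeq},'' and your argument is a faithful unpacking of that sentence. The only thing you add is the (correct) bookkeeping that any $(1,0)$-field has the form $X - iJX$, which the paper leaves implicit.
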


\begin{proof}
    This follows immediately from Propositions \ref{pseudo3} and \ref{holeq}.
\end{proof}

Now we give nice properties of (normal) pseudo and quasi holomorphic frames.

\begin{lemma}
\label{gauge}
    Let $\set{e_i}$ be a pseudo holomorphic frame at $p$.
    \begin{enumerate}[label=(\alph*)]
        
        \item
        $\Gamma_{\bar{j} k}^\ell(p) =0$.
        
        \item
        $\Gamma_{j \bar{k}}^{\bar{\ell}}(p) =0$.
        
        \item
        $[e_j, \bar{e}_k](p)=0$.
        
        \item
        $\Gamma_{ij}^k(p)=e_i[g_{j\bar{\ell}}](p)g^{k \bar{\ell}}(p)$.
        
        \item
        $\Gamma_{\bar{i}\bar{j}}^{\bar{k}}(p)=\bar{e}_i[g_{\ell \bar{j}}](p)g^{\ell \bar{k}}(p)$.
        
        \item
        Suppose $\set{e_i}$ is quasi holomorphic at $p$. Then $[\nabla_{e_i} \nabla_{\bar{e}_j} e_k](p)=0$
        
        \item
        Suppose $\set{e_i}$ is quasi holomorphic at $p$. 
        
        \noindent
        Then $R_{i\bar{j}k\bar{\ell}}(p)= -\bar{e}_{j}[e_i[g_{k \bar{\ell}}]] + g^{a \bar{b}}e_i[g_{k\bar{b}}]\bar{e}_j[g_{a \bar{\ell}}]$.
        
        \item
        Suppose $\set{e_i}$ is normal quasi holomorphic at $p$. 
        
        \noindent
        Then $R_{i\bar{j}k\bar{\ell}}(p)= -\bar{e}_{j}[e_i[g_{k \bar{\ell}}]]$.

    \end{enumerate}
\end{lemma}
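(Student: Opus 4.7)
The plan is to leverage three defining properties of the almost Chern connection---metric compatibility $\nabla g = 0$, the identity $\nabla J = 0$, and the vanishing of $T^{1,1}$---together with the Lie bracket hypotheses supplied by (pseudo/quasi/normal) holomorphy. Since $\nabla J = 0$ the connection preserves the type decomposition, so the only nonzero Christoffel symbols in the $(1,0)$-frame are of the form $\Gamma_{*j}^k$ and $\Gamma_{*\bar j}^{\bar k}$. Meanwhile the vanishing of $T^{1,1}$ yields the key ``dictionary'' $\nabla_{e_k}\bar{e}_j - \nabla_{\bar{e}_j} e_k = [e_k,\bar{e}_j]$, which converts Lie bracket conditions into covariant derivative conditions.

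Parts (a)--(c) come directly from this dictionary. Projecting the identity above onto the $(1,0)$-part, the left side equals $-\Gamma_{\bar{j}k}^\ell e_\ell$ (since $\nabla_{e_k}\bar{e}_j$ is purely $(0,1)$), while the right side vanishes at $p$ by pseudo holomorphy; this is (a). Part (b) is the complex conjugate of (a), and (c) follows because the complex conjugate of the pseudo holomorphy condition forces the $(0,1)$-part of $[e_j,\bar{e}_k]$ to vanish at $p$ as well. Parts (d) and (e) are one-line computations: differentiate $g_{j\bar\ell} = g(e_j,\bar{e}_\ell)$ along $e_i$ using $\nabla g = 0$, drop the $\Gamma_{i\bar\ell}^{\bar m}$-term via (b), and contract with the inverse metric.

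For (f), I would use the quasi holomorphic hypothesis $[e_i,[\bar{e}_j,e_k]]^{1,0}(p)=0$ after rewriting $[\bar{e}_j,e_k] = \nabla_{\bar{e}_j} e_k - \nabla_{e_k}\bar{e}_j$ via $T^{1,1}=0$. The second piece $\nabla_{e_k}\bar{e}_j$ is $(0,1)$ and vanishes at $p$ by (b), so its bracket with $e_i$ contributes nothing to the $(1,0)$-part at $p$. In the first piece $A := \nabla_{\bar{e}_j} e_k$ is $(1,0)$ and vanishes at $p$ by (a); expanding $[e_i,A] = \nabla_{e_i} A - \nabla_A e_i - T(e_i,A)$, the tensoriality of the last two terms in $A$ kills them at $p$, leaving exactly $(\nabla_{e_i}\nabla_{\bar{e}_j}e_k)(p)$, which the hypothesis then sets to zero.

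For (g), I substitute (f) and (c) into the curvature formula $R(e_i,\bar{e}_j)e_k = \nabla_{e_i}\nabla_{\bar{e}_j} e_k - \nabla_{\bar{e}_j}\nabla_{e_i} e_k - \nabla_{[e_i,\bar{e}_j]} e_k$ to obtain $R_{i\bar j k \bar \ell}(p) = -\bar{e}_j[\Gamma_{ik}^m](p)\, g_{m\bar\ell}(p)$. Differentiating the lowered form of (d), namely $\Gamma_{ik}^m g_{m\bar\ell} = e_i[g_{k\bar\ell}] - \Gamma_{i\bar\ell}^{\bar n} g_{k\bar n}$, along $\bar{e}_j$ and evaluating at $p$ then yields the desired formula, \emph{provided} one first shows $\bar{e}_j[\Gamma_{i\bar\ell}^{\bar n}](p) = 0$. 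This is the main technical hurdle. To clear it I use the identity $\Gamma_{i\bar\ell}^{\bar n} \bar{e}_n = [e_i,\bar{e}_\ell]^{0,1}$ (again from $T^{1,1}=0$) together with the complex conjugate of the quasi holomorphic hypothesis, $[\bar{e}_j,[e_i,\bar{e}_\ell]]^{0,1}(p) = 0$, controlling the cross-terms using (b) and (c). Finally, (h) drops out of (g) at once: normality forces every $\Gamma_{ij}^k(p) = 0$, so (d) gives $e_i[g_{k\bar b}](p) = 0$, killing the correction term.
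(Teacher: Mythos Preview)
Your proposal is correct and follows essentially the same route as the paper: the same use of $T^{1,1}=0$ as a ``dictionary'' between brackets and covariant derivatives, the same metric-compatibility computations for (d)--(e), and the same reduction of (g) to (f) (your hurdle $\bar{e}_j[\Gamma_{i\bar\ell}^{\bar n}](p)=0$ is exactly the paper's observation that $g(e_k,\nabla_{\bar{e}_j}\nabla_{e_i}\bar{e}_\ell)=\overline{g(\nabla_{e_j}\nabla_{\bar{e}_i}e_\ell,\bar{e}_k)}=0$ via (f)). Two small points of exposition worth tightening: in (f), the claim that $[e_i,B]^{1,0}(p)=0$ for $B=\nabla_{e_k}\bar{e}_j$ does not follow merely from $B(p)=0$ (the bracket is not tensorial in $B$); you should say explicitly that $\nabla_{e_i}B$ is of type $(0,1)$ and the remaining tensorial pieces vanish at $p$. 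In (g), what you differentiate is not ``the lowered form of (d)'' but the global metric-compatibility identity $e_i[g_{k\bar\ell}]=\Gamma_{ik}^m g_{m\bar\ell}+\Gamma_{i\bar\ell}^{\bar n}g_{k\bar n}$, valid in a neighborhood, not just at $p$.
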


\begin{proof}
    Most of these results can be found in \cite{2012arXiv1209.5078Y}, though we will prove them here for completeness. We will often omit the point of evaluation to avoid clutter.
    \begin{enumerate}[label=(\alph*)]
        
        \item
        $\Gamma_{\bar{j} k}^\ell = \theta^{\ell}[\nabla_{\bar{e}_j} e_k]=\theta^{\ell}[0]=0$.
        
        \item
        Observe that $g(e_\ell, \nabla_{e_j} \bar{e}_k)=\overline{g(\nabla_{\bar{e}_j} e_k ,\bar{e}_\ell)}=\overline{g(0,\bar{e}_{\ell})}=0$, since $e_k$ is pseudo holomorphic. As this is true for any $e_\ell$ (and any $\bar{e}_\ell$, trivially), it must be that $\nabla_{e_j} \bar{e}_k=0$, and hence each $\Gamma_{j \bar{k}}^{\bar{\ell}}$.
        
        \item
        $[e_j, \bar{e}_k]=\nabla_{e_j} \bar{e}_k - \nabla_{\bar{e}_k} e_j - \Theta(e_j,\bar{e}_k)=0$, as each term evaluated at $p$ is $0$.
        
        \item
        We see that $T^{1,0}M$ and $T^{0,1}M$ are parallel subbundles of $T^{\mb{C}}M$. This, together with (b), yields
        \begin{gather*}
            e_i [g(e_j, \bar{e}_\ell)] = g(\nabla_{e_i} e_j, \bar{e}_\ell)+g(e_j, \nabla_{e_i} \bar{e}_\ell)= \Gamma_{ij}^k g_{k \bar{\ell}}+0.
        \end{gather*}
        Now rearrange to get the desired formula.
        
        \item
        We perform a similar computation as above, invoking (a) instead.
        \begin{gather*}
            \bar{e}_i [g(e_\ell, \bar{e}_j)] = g(\nabla_{\bar{e}_i} e_\ell, \bar{e}_j)+g(e_\ell, \nabla_{\bar{e}_i} \bar{e}_j)=0+\Gamma_{\bar{i} \bar{j}}^{\bar{k}} g_{j \bar{k}}.
        \end{gather*}
        Once again, simply rearrange and we are done.
        
        \item
        Keeping in mind that $\Theta^{1,1}=0$,
        \begin{gather*}
            \nabla_{e_i} \nabla_{\bar{e}_j} e_k=\nabla_{e_i} ([\bar{e}_j, e_k]+\nabla_{e_k} \bar{e}_j) \\
            = [e_i,([\bar{e}_j, e_k]+\nabla_{e_k} \bar{e}_j)]+\nabla_{([\bar{e}_j, e_k]+\nabla_{e_k} \bar{e}_j)} e_i.
        \end{gather*}
        When we evaluate at $p$, we may invoke (a), (b), and (c).
        \begin{gather*}
            \nabla_{e_i} \nabla_{\bar{e}_j} e_k = [e_i,[\bar{e}_j, e_k]+\nabla_{e_k} \bar{e}_j]=[e_i,[\bar{e}_j, e_k]]+[e_i,\nabla_{e_k} \bar{e}_j] \\
            =[e_i,[\bar{e}_j, e_k]]+\nabla_{e_i} \nabla_{e_k} \bar{e}_j - \nabla_{\nabla_{e_k} \bar{e}_j}  e_i - \Theta(e_i,\nabla_{e_k} \bar{e}_j) \\
            =[e_i,[\bar{e}_j, e_k]]+\nabla_{e_i} \nabla_{e_k} \bar{e}_j - \nabla_{0}  e_i - \Theta(e_i,0) \\
            =[e_i,[\bar{e}_j, e_k]]+\nabla_{e_i} \nabla_{e_k} \bar{e}_j
        \end{gather*}
        But $\nabla_{e_i} \nabla_{e_k} \bar{e}_j$ is also a $(0,1)$ vector, so
        \begin{gather*}
            [e_i,[\bar{e}_j, e_k]]^{1,0} = \left[\nabla_{e_i} \nabla_{\bar{e}_j} e_k-\nabla_{e_i} \nabla_{e_k} \bar{e}_j \right]^{1,0}=\nabla_{e_i} \nabla_{\bar{e}_j} e_k=0.
        \end{gather*}
        
        \item
        We perform a direct computation, using (c), (f), and then (d) and (e).
        \begin{gather*}
            R_{i \bar{j} k \bar{\ell}} = g(\nabla_{e_i} \nabla_{\bar{e}_j} e_k - \nabla_{\bar{e}_j} \nabla_{e_i} e_{k} - \nabla_{[e_i,\bar{e}_j]} e_k, \bar{e}_{\ell}) \\
            = -g(\nabla_{\bar{e}_j} \nabla_{e_i} e_{k}, \bar{e}_{\ell}) = -\bar{e}_j [g( \nabla_{e_i} e_{k}, \bar{e}_{\ell})]+g( \nabla_{e_i} e_{k}, \nabla_{\bar{e}_j} \bar{e}_{\ell}) \\
            = -\bar{e}_j [e_i [g(e_{k}, \bar{e}_{\ell})]-g(e_{k}, \nabla_{e_i} \bar{e}_{\ell})]+g( \nabla_{e_i} e_{k}, \nabla_{\bar{e}_j} \bar{e}_{\ell}) \\
            = -\bar{e}_j [e_i [g_{k \bar{\ell}} ]]+0+g(  e_{k}, \nabla_{\bar{e}_j} \nabla_{e_i} \bar{e}_{\ell})+g( \nabla_{e_i} e_{k}, \nabla_{\bar{e}_j} \bar{e}_{\ell}) \\
            = -\bar{e}_j [e_i [g_{k \bar{\ell}} ]]+
            \overline{g(\nabla_{e_j} \nabla_{\bar{e}_i} e_{\ell},\bar{e}_{k})}
            +g( \nabla_{e_i} e_{k}, \nabla_{\bar{e}_j} \bar{e}_{\ell}) \\
            = -\bar{e}_j [e_i [g_{k \bar{\ell}} ]]+g( \nabla_{e_i} e_{k}, \nabla_{\bar{e}_j} \bar{e}_{\ell}) 
            =-\bar{e}_j [e_i [g_{k \bar{\ell}} ]]+g_{a\bar{b}}\Gamma_{ik}^{a} \Gamma_{j \bar{\ell}}^{\bar{b}} \\
            = -\bar{e}_j [e_i [g_{k \bar{\ell}} ]]+g_{a\bar{b}} (e_i [ g_{k \bar{c}}] g^{a \bar{c}})(\bar{e}_j [g_{ d \bar{\ell}}] g^{d \bar{b}}) \\
            -\bar{e}_j [e_i [g_{k \bar{\ell}} ]]+ e_i [ g_{k \bar{b}}]\bar{e}_j [g_{ d \bar{\ell}}] g^{d \bar{b}}=-\bar{e}_j [e_i [g_{k \bar{\ell}} ]]+ g^{a \bar{b}} e_i [ g_{k \bar{b}}]\bar{e}_j [g_{a \bar{\ell}}] 
        \end{gather*}
        
        \item
        This follows immediately from (g) together with the condition that each $(\nabla e_i)(p)=0$, or equivalently, that each $d g_{i \bar{j}}=0$.

    \end{enumerate}
\end{proof}

\section{Proof of Main Theorem}

We are now ready to start our main proof, though we will take a brief, but necessary foray into Cartan's structure equations near the end.

\begin{proof}[Proof of Theorem \ref{main}.]
    We will adopt the convention of using the Greek alphabet to index objects in $M$ and the Roman alphabet to index objects in $N$. Moreover, we will be carrying out computations using local $(1,0)$-frames $\set{e_\alpha}$ and $\set{\tilde{e}_i}$ in $M$ and $N$. Let us also use $\set{\theta^\alpha}$ and $\set{\tilde{\theta}^i}$ for their respective coframes. The key to proving our main result is the function
    \begin{gather*}
        \msc{Y}=\frac{g(f_* W, f_* W)}{\norm{W}^2},
    \end{gather*}
    where $W \in TM\backslash \set{0}$. Note that this function is constant under rescaling and multiplication by $J$ (since $f$ is almost holomorphic). So we may view $\msc{Y}$ as a non-negative function defined on $\mb{P}(TM)$. We may use our frames to express this function locally.
    \begin{gather*}
        \msc{Y}=\frac{g_{i\bar{j}}f^i_\alpha \bar{f}_\beta^j W^\alpha \bar{W}^\beta}{h_{\gamma \bar{\delta}}W^{\gamma}\bar{W}^{\delta}}
    \end{gather*}
    We will also use a variety of identifications to make our calculations easier.
    \begin{gather*}
        \msc{H} \coloneqq h_{\gamma \bar{\delta}}W^{\gamma}\bar{W}^{\delta} \\
        \msc{F} \coloneqq  g_{i\bar{j}}f^i_\alpha \bar{f}_\beta^j W^\alpha W^\beta \\
        F^i \coloneqq f^i_\alpha W^\alpha \\
        \msc{Y} = \frac{\msc{F}}{\msc{H}}
    \end{gather*}
    In this manner, we may rewrite a few of our expressions:
    \begin{gather*}
        \msc{Y} = \frac{\msc{F}}{\msc{H}}, \qquad \msc{F} = g_{i\bar{j}}F^i \bar{F}^j.
    \end{gather*}
    Note that $F^i$ is nothing more than the $i$th component of $df(W)$. Now we perform a similar computation as in \cite{2018arXiv181003276Y}.
    \begin{gather}
        \ddbar{\msc{Y}}= \ddbar \left( \frac{\msc{F}}{\msc{H}} \right) = \partial \left[ \frac{(\bar{\partial} \msc{F})}{\msc{H}} - \frac{\msc{F} (\bar{\partial} \msc{H})}{\msc{H}^2} \right] \\
        =\frac{(\ddbar \msc{F})}{\msc{H}}-\frac{(\partial \msc{H}) \wedge (\bar{\partial}  \msc{F})}{\msc{H}^2}-\frac{(\partial \msc{F}) \wedge (\bar{\partial}  \msc{H})}{\msc{H}^2}\\
        +2\frac{\msc{F} (\partial \msc{H}) \wedge (\bar{\partial}  \msc{H})}{\msc{H}^3}-\frac{\msc{F} (\ddbar \msc{H})}{\msc{H}^2} \\
        =\msc{Y} (\ddbar \log \msc{H}^{-1})+\frac{(\partial \msc{F}) \wedge (\bar{\partial}  \log \msc{H}^{-1})+(\partial \log \msc{H}^{-1}) \wedge (\bar{\partial}  \msc{F})}{\msc{H}}\\
        +\frac{\msc{F}(\partial \log\msc{H}) \wedge (\bar{\partial}  \log \msc{H})}{\msc{H}}+\frac{(\ddbar \msc{F})}{\msc{H}}
    \end{gather}
    
    The next step is a term-by-term analysis. The goal is to pick a suitable frame to simplify matters. As a matter of notation, if we write $H^{\alpha \bar{\beta}} \coloneqq W^\alpha \bar{W}^{\beta}$, then $\msc{H} = H^{\alpha \bar{\beta}} h_{\alpha \bar{\beta}}$. Observe that
    \begin{align}
        &\partial \log \msc{H} = \frac{\partial H^{\alpha \bar{\beta}} h_{\alpha \bar{\beta}} +  H^{\alpha \bar{\beta}} \partial h_{\alpha \bar{\beta}}}{\msc{H}}, \quad \mathrm{and} \\
        &\bar{\partial} \log \msc{H} = \frac{\bar{\partial} H^{\alpha \bar{\beta}} h_{\alpha \bar{\beta}} +  H^{\alpha \bar{\beta}} \bar{\partial} h_{\alpha \bar{\beta}}}{\msc{H}}.
    \end{align}
    Now let $q = (p,[W^1,\ldots,W^m]) \in \mb{P}(TM)$, where $[W^1,\ldots,W^m]$ is a homogeneous, complex coordinate with respect to $\set{e_\alpha}$, a normal quasi holomorphic frame at $p$. If we restrict ourselves to horizontal, complex tangent vectors at $q$, then the terms involving derivatives of $H^{\alpha \bar{\beta}}$ vanish. And since our frame is normal at $p$, $dh_{\alpha \bar{\beta}}=\partial h_{\alpha \bar{\beta}} + \bar{\partial} h_{\alpha \bar{\beta}}=0$. In other words, these forms vanish identically on horizontal tangent vectors.
    
    With these conditions, only the first and last terms survive in our formula for $\ddbar \msc{Y}$. Let us move on to the first term.
    \begin{gather}
        \ddbar log \msc{H}^{-1} = - \partial \left( \frac{\bar{\partial} \msc{H}}{\msc{H}} \right) = -\frac{\ddbar \msc{H}}{\msc{H}}+\frac{\partial \msc{H} \wedge \bar{\partial} \msc{H}}{\msc{H}^2} \\
        =-\frac{\partial H^{\alpha \bar{\beta}} \wedge \bar{\partial} h_{\alpha \bar{\beta}}+ \partial h_{\alpha \bar{\beta}} \wedge \bar{\partial} H^{\alpha \bar{\beta}}+H^{\alpha \bar{\beta}} \ddbar h_{\alpha \bar{\beta}}+h_{\alpha \bar{\beta}} \ddbar H^{\alpha \bar{\beta}}}{\msc{H}} \\
        +\frac{[H^{\alpha \bar{\beta}} \partial h_{\alpha \bar{\beta}}+h_{\alpha \bar{\beta}} \partial H^{\alpha \bar{\beta}}] \wedge [H^{\gamma \bar{\delta}} \bar{\partial} h_{\gamma \bar{\delta}}+h_{\gamma \bar{\delta}} \bar{\partial} H^{\gamma \bar{\delta}}] }{\msc{H}^2}
    \end{gather}
    We may impose the same restrictions as we did earlier, but instead of selecting an arbitrary horizontal direction, set $u = (W^1,\ldots,W^m, \underbrace{0,\ldots,0}_{m-1})$, where we have used the same coordinates present in the basepoint.
    \begin{gather}
        \ddbar log \msc{H}^{-1}(u, \bar{u})=-\frac{1}{\msc{H}}H^{\alpha \bar{\beta}} \ddbar h_{\alpha \bar{\beta}}(u,\bar{u})\\
         = -\frac{1}{\msc{H}} W^{\alpha} \bar{W}^{\beta} W^{\gamma} \bar{W}^{\delta}\ddbar h_{\alpha \bar{\beta}}(e_\gamma,\bar{e}_{\delta})
    \end{gather}
    We can take this further, using Proposition \ref{del2} and Lemma \ref{gauge}.
    \begin{gather}
        \ddbar h_{\alpha \bar{\beta}} (e_\gamma,\bar{e}_{\delta}) = \nabla^2(e_j, \bar{e}_{\bar{\delta}})=e_{\gamma} (\bar{e}_{\delta} (h_{\alpha \bar{\beta}})) - (\nabla_{e_{\gamma}} \bar{e}_{\delta})(h_{\alpha \bar{\beta}}) \\
        =e_{\gamma} (\bar{e}_{\delta} (h_{\alpha \bar{\beta}})) = \bar{e}_{\delta}( e_{\gamma} (h_{\alpha \bar{\beta}}))=-R_{\gamma \bar{\delta} \alpha \bar{\beta}}
    \end{gather}
    Putting this together (and renaming for aesthetic purposes),
    \begin{gather}
        \ddbar \log \msc{H}^{-1}(u, \bar{u})=\frac{1}{\msc{H}} W^{\alpha} \bar{W}^{\beta} W^{\gamma} \bar{W}^{\delta}R_{\alpha \bar{\beta} \gamma \bar{\delta}}.
    \end{gather}
    
    Finally, we turn our attention to (the numerator of) the last term of Line 5. As before, we will perform a general expansion first, and then simplify terms. For emphasis, terms that are not present in the complex setting are colored in red.
    \begin{align}
        \ddbar \msc{F} &=\ddbar g_{i\bar{j}}F^i \bar{F}^j & &+ (\partial F^i \wedge \bar{\partial} g_{i\bar{j}}) \bar{F}^j&  &+ {\color{red} (\partial \bar{F}^j \wedge \bar{\partial} g_{i\bar{j}})F^i} \\
        &+ (\partial g_{i\bar{j}} \wedge \bar{\partial} \bar{F}^j) F^i& &+ g_{i\bar{j}}(\partial F^i \wedge \bar{\partial} \bar{F}^j )& &+ {\color{red} g_{i\bar{j}} F^i (\ddbar \bar{F}^j)} \\
        &+ {\color{red} (\partial g_{i\bar{j}} \wedge \bar{\partial} F^i) \bar{F}^j}& &+ {\color{red} g_{i\bar{j}} (\ddbar F^i) \bar{F}^j}& &+ {\color{red} g_{i\bar{j}} (\partial \bar{F}^j \wedge \bar{\partial} F^i)}
    \end{align}
    As we did in the domain, we can choose a normal quasi holomorphic frame $\set{\tilde{e}_i}$ at $f(p)$. This will eliminate the second, third, fourth, and seventh terms on the RHS above.
    
    At this point, we must derive some additional formulas. Using (generic) frames, we have currently been writing $f_{*} e_{\alpha} = f^{i}_{\alpha} \tilde{e}_i$. But is also true that 
    \begin{gather}
        \eta \coloneqq f^{*}\tilde{\theta}^i = f^i_{\alpha} \theta^{\alpha}.
    \end{gather}
    As the exterior derivative commutes with pullbacks, this gives us two formulas for $d\eta$.
    \begin{gather}
        d \eta = df^{i}_{\alpha} \wedge \theta^\alpha + f^{i}_{\alpha} d\theta^\alpha= df^{i}_{\alpha} \wedge \theta^{\alpha} + f^{i}_{\alpha} [-\theta^{\alpha}_{\beta} \wedge \theta^{\beta} + \Theta^{\alpha}] \\
        d \eta= f^{*} d\tilde{\theta}^i = f^{*} [-\tilde{\theta}^{i}_{j} \wedge \tilde{\theta}^{j} + \tilde{\Theta}^{i}] =  -(f^{*} \tilde{\theta}^{i}_{j}) \wedge f^j_{\alpha} \theta^{\alpha} +  f^{*} \tilde{\Theta}^{i}
    \end{gather}
    Here, we have used the first structure equations for $M$ and $N$, and the fact that pullbacks distribute over wedge products. If we set these equations equal to each other and rearrange, we have
    \begin{gather}
        [df^{i}_{\alpha}+(f^{*} \tilde{\theta}^i_j )f^{j}_{\alpha} -f^i_{\gamma} \theta^{\gamma}_{\alpha}] \wedge \theta^{\alpha} = f^{*} \tilde{\Theta}^{i} - f^i_{\alpha} \Theta^{\alpha}.
    \end{gather}
    The RHS has no $(1,1)$ component, so we may write
    \begin{gather}
        df^{i}_{\alpha}+(f^{*} \tilde{\theta}^i_j )f^{j}_{\alpha} -f^i_{\gamma} \theta^{\gamma}_{\alpha} = f^{i}_{\alpha \beta} \theta^{\beta}
    \end{gather}
    for some smooth function $f^i_{\alpha \beta}$. Let us take a closer look at the connection 1-forms. One should verify that in any frame,
    \begin{gather}
        \theta^{\gamma}_{\alpha} = \left[\Gamma_{\beta \alpha}^{\gamma} \theta^{\beta}+ \Gamma_{\bar{\beta} \alpha}^{\gamma} \bar{\theta}^{\beta} \right]
    \end{gather}
    If one selects a pseudo (or quasi) holomorphic frame at $p \in M$ (as we did), then the connection 1-forms are entirely $(1,0)$ at $p$. The same argument holds for $\tilde{\theta}^i_j$ at $f(p)$. From Line 23, this means that $df^i_{\alpha}(p) = \partial f^{i}_{\alpha}(p)$. Or in other words, $\bar{\partial} f^i_{\alpha}(p)=0$. Combined with the restriction to horizontal vectors, this eliminates the ninth term in our formula for $\ddbar \msc{F}$. Only the first, fifth, sixth, and eighth terms remain, but not for long.
   
   The decomposition in Line 24 allows us to rewrite Line 23 in a convenient manner, where we will now stop explicitly writing the pullbacks.
   \begin{gather}
       df^{i}_{\alpha} = f^{i}_{\alpha \beta} \theta^{\beta}+f^i_{\gamma} \theta^{\gamma}_{\alpha}- \tilde{\theta}^i_j f^{j}_{\alpha} \\
       = f^{i}_{\alpha \beta} \theta^{\beta} + f^i_{\gamma} \left[\Gamma_{\beta \alpha}^{\gamma} \theta^{\beta} + \Gamma_{\bar{\beta} \alpha}^{\gamma} \bar{\theta}^{\beta} \right] - f^{j}_{\alpha} \left[\tilde{\Gamma}_{k j}^{i} \tilde{\theta}^{k}+ \tilde{\Gamma}_{\bar{k} j}^{i} \bar{\tilde{\theta}}^{k} \right] \\
       = \left[f^{i}_{\alpha \beta} \theta^{\beta}+f^i_{\gamma} \Gamma_{\beta \alpha}^{\gamma} \theta^{\beta} - f^{j}_{\alpha} \tilde{\Gamma}_{k j}^{i} \tilde{\theta}^{k} \right] +
       \left[ f^i_{\gamma} \Gamma_{\bar{\beta} \alpha}^{\gamma} \bar{\theta}^{\beta} - f^{j}_{\alpha} \tilde{\Gamma}_{\bar{k} j}^{i} \bar{\tilde{\theta}}^{k} \right] \\
       = \partial f^i_{\alpha}+\bar{\partial} f^{i}_{\alpha}
   \end{gather}
   This formula holds locally, not just at $p$.
   \begin{gather}
       d\bar{\partial} f^{i}_{\alpha} = df^{i}_{\gamma} \wedge \Gamma_{\bar{\beta} \alpha}^{\gamma} \bar{\theta}^{\beta} -df^{j}_{\alpha} \wedge \tilde{\Gamma}_{\bar{k} j}^{i} \bar{\tilde{\theta}}^{k}+f^{i}_{\gamma} \Gamma_{\bar{\beta} \alpha}^{\gamma}  d \bar{\theta}^{\beta} -f^{j}_{\alpha} \tilde{\Gamma}_{\bar{k} j}^{i}  d \bar{\tilde{\theta}}^{k}+\\
       f^{i}_{\gamma} d\Gamma_{\bar{\beta} \alpha}^{\gamma} \wedge  \bar{\theta}^{\beta} - f^{j}_{\alpha} d \tilde{\Gamma}_{\bar{k} j}^{i}  \wedge \bar{\tilde{\theta}}^{k}
   \end{gather}
   Since we additionally chose our frames to be normal, all Christoffel symbols vanish at $p$ and $f(p)$, which eliminates the first four terms on the RHS of Line 29. This means
   \begin{gather}
       \ddbar f^{i}_{\alpha}(p) = \left[ d \bar{\partial} f^{i}_{\alpha} \right]^{1,1} (p) = \left[f^{i}_{\gamma}  (\partial \Gamma_{\bar{\beta} \alpha}^{\gamma} \wedge  \bar{\theta}^{\beta} ) - f^{j}_{\alpha} (\partial  \tilde{\Gamma}_{\bar{k} j}^{i}  \wedge \bar{\tilde{\theta}}^{k} ) \right](p)
   \end{gather}
   We have only to understand what the derivatives of the mixed Christoffel symbols are at $p$ and $f(p)$.
   \begin{gather}
       e_{\mu} \left[ \Gamma^{\gamma}_{\bar{\beta} \alpha}  \right]= e_{\mu} \left[ \theta^{\gamma}(\nabla_{\bar{e}_{\beta}} e_{\alpha}) \right] = [\nabla_{e_\mu} \theta^{\gamma}](\nabla_{\bar{e}_{\beta}} e_{\alpha})+ \theta^{\gamma} (\nabla_{e_{\mu}} \nabla_{\bar{e}_{\beta}} e_{\alpha})
   \end{gather}
    But $\nabla_{\bar{e}_{\beta}} e_{\alpha}=0$ at $p$ because our frame is pseudo holomorphic and $\nabla_{e_{\mu}} \nabla_{\bar{e}_{\beta}} e_{\alpha}=0$ at $p$ precisely because our frame is also quasi holomorphic. In a similar fashion, 
    \begin{gather}
       e_{\mu} \left[ \tilde{\Gamma}^{i}_{\bar{k} j} \circ f  \right] = f^{\ell}_{\mu} e_{\ell} \left[ \tilde{\Gamma}^{i}_{\bar{k} j} \right]=0.
   \end{gather}
    Thus the eighth term perishes. The reader is left to verify that the sixth term also vanishes under our assumptions. We turn our attention to the first term in our formula for $\ddbar \msc{F}$.
    \begin{gather}
        \ddbar (g_{ij} \circ f)(e_{\gamma},\bar{e}_{\delta}) = e_{\gamma} (\bar{e}_{\delta} (g_{i \bar{j}} \circ f )) - (\nabla_{e_{\gamma}} \bar{e}_{\delta} )(g_{i \bar{j}} \circ f) \\
        = e_{\gamma} (\bar{e}_{\delta} (g_{i \bar{j}} \circ f )) =e_{\gamma} (\bar{f}^{\ell}_{\delta} \bar{e}_{\ell}(g_{i \bar{j}})) \\= e_{\gamma} (\bar{f}^{\ell}_{\delta})  \bar{e}_{\ell}(g_{i \bar{j}})+ \bar{f}^{\ell}_{\delta} f^{k}_{\gamma} e_{\gamma}(\bar{e}_{\ell}(g_{i \bar{j}})) \\
        =(\partial \bar{f}^{\ell}_{\delta})(e_{\gamma})  \bar{e}_{\ell}(g_{i \bar{j}})+ \bar{f}^{\ell}_{\delta} f^{k}_{\gamma} \bar{e}_{\ell}( e_{\gamma}(g_{i \bar{j}})) = 0-\bar{f}^{\ell}_{\delta} f^{k}_{\gamma} R_{k \bar{\ell} i \bar{j}} 
    \end{gather}
    
    With this analysis completed, we may give provide the main argument now. Suppose $\msc{Y}$ attains a non-zero maximum at some $q=(p,[W^1,\ldots,W^m]) \in \mb{P}(TM)$, where $[W^1,\ldots,W^m]$ is a complex, homogeneous coordinate with respect to a normal quasi holomorphic frame $e_\alpha$ at $p$. Then $(\ddbar \msc{Y})(q) \leq 0$. However, when we evaluate this form on $(u,\bar{u})$, where $u=(W^1,\ldots,W^m)$, also using a normal quasi holomorphic frame $\tilde{e}_i$ at $f(p)$, we get
    \begin{gather*}
        \ddbar \msc{Y} (u,\bar{u}) = \frac{\msc{Y}(q)}{\msc{H}} R_{\alpha \bar{\beta} \gamma \bar{\delta}} W^{\alpha} \bar{W}^{\beta} W^{\gamma} \bar{W}^{\delta} \\
        +\frac{F^{i} \bar{F}^j \ddbar g_{i \bar{j}}(u, \bar{u})+g_{i\bar{j}} (\partial F^i \wedge \bar{\partial} \bar{F}^j)(u, \bar{u})}{\msc{H}} \\
        = \frac{\msc{Y}(q)}{\msc{H}} R_{\alpha \bar{\beta} \gamma \bar{\delta}} W^{\alpha} \bar{W}^{\beta} W^{\gamma} \bar{W}^{\delta}-\frac{1}{\msc{H}} f^{i}_{\alpha} W^{\alpha} \bar{f}^j_{\beta} \bar{W}^{\beta} \bar{f}^{\ell}_{\delta} f^{k}_{\gamma} R_{k \bar{\ell} i \bar{j}} W^{\gamma} \bar{W}^{\delta} \\
        +\frac{1}{\msc{H}} g_{i\bar{j}} (\partial F^i \wedge \bar{\partial} \bar{F}^j)(u, \bar{u}) \\
        \geq \frac{\msc{Y}(q)}{\msc{H}} R_{\alpha \bar{\beta} \gamma \bar{\delta}} W^{\alpha} \bar{W}^{\beta} W^{\gamma} \bar{W}^{\delta}-\frac{1}{\msc{H}} R_{i \bar{j} k \bar{\ell}} f^{i}_{\alpha} \bar{f}^j_{\beta} f^{k}_{\gamma} \bar{f}^{\ell}_{\delta} W^{\alpha} \bar{W}^{\beta} W^{\gamma} \bar{W}^{\delta} > 0.
    \end{gather*}
    We have reached a contradiction and thus conclude that $\msc{Y} \equiv 0$. Consequently, $f$ is constant, as the components of $f_{*}$ must be identically 0 (in any frame) due to $g$ being positive definite.
\end{proof}

\section{Acknowledgements}

I would like to thank Dr. Fangyang Zheng for suggesting this project, and for his tips for when I would run into difficulty. I would also like to thank my thesis advisor Dr. Andrzej Derdzinski for his advice and constant reminders to first understand the situation from a Riemannian perspective.

\newpage

\bibliographystyle{alphaurl}
\nocite{*}
\bibliography{holomorphicRefs}

\end{document}